\theoremstyle{plain}
\newtheorem{thm}{Theorem}
\newtheorem{claim}[thm]{Claim}
\newtheorem{lem}[thm]{Lemma}
\newtheorem{prop}[thm]{Proposition}
\begin{document}

\begin{frontmatter}[classification=text]


\author[jojo]{Attila Jo\'o \thanks{Funding was provided by the
Alexander von Humboldt Foundation and by NKFIH OTKA-129211.}}

\begin{abstract}
 Nash-Williams proved in \cite{nash1960decomposition} that for an undirected graph $ G $ the set $ E(G) $ can be partitioned into cycles if and only if there is no finite cut of odd size. Later C. Thomassen gave a simpler proof for this in \cite{thomassen2017nash} and conjectured  the following 
 directed analogue of the theorem: the edge set 
 of a digraph can be partitioned into directed cycles  if and only if for each subset of the vertices 
 the cardinality of the ingoing and the outgoing edges are equal. The aim of the paper is to prove this conjecture.
\end{abstract}
\end{frontmatter}

\section{Introduction}
 One of Nash-Williams'  famous results  in infinite graph theory is the following:

\begin{thm}[Nash-Williams, \cite{nash1960decomposition} (p. 235 Theorem 3)]\label{nwundi}
If $ G $ is an undirected graph, then $ E(G) $ can be partitioned into cycles if and only if there is no finite cut of odd size.
\end{thm}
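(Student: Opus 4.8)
The plan is to prove the two implications separately; the forward one is routine and the reverse one carries all the difficulty.

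\emph{Necessity.} Suppose $E(G)=\bigsqcup_{i\in I}E(C_i)$ is a partition into (finite) cycles. Fix $X\subseteq V(G)$ and let $\partial X$ be the set of edges with exactly one end in $X$. Traversing a cycle $C_i$ we return to its starting vertex, so it crosses between $X$ and $V(G)\setminus X$ an even number of times; that is, $|E(C_i)\cap\partial X|$ is even. If $\partial X$ is finite then, the sets $E(C_i)$ being pairwise disjoint and each meeting cycle using at least one edge of $\partial X$, only finitely many cycles meet $\partial X$, and $|\partial X|=\sum_i|E(C_i)\cap\partial X|$ is a finite sum of even numbers, hence even. Thus every finite cut is even, which is the stated condition.

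\emph{Sufficiency.} Assume every cut is even or infinite. Two observations drive the argument. First, the hypothesis says precisely that there is no finite odd cut; in particular no cut consists of a single edge, so $G$ has no bridge, and therefore \emph{every edge lies on a finite cycle} (if $e=uv$ is not a bridge, then $G-e$ contains a path between $u$ and $v$, which together with $e$ closes into a finite cycle). Second, \emph{deleting the edges of one finite cycle $C$ preserves the hypothesis}: for any $X$ the cycle meets $\partial X$ evenly, so a finite even cut stays finite and even while an infinite cut stays infinite. Hence one may repeatedly pull out finite cycles while keeping the property that lets us continue.

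For a countable $G$ this already finishes the proof. Enumerate $E(G)=\{e_0,e_1,\dots\}$ and build an increasing sequence of pairwise edge-disjoint finite cycles by recursion: at step $n$, if $e_n$ is still uncovered in the remaining graph $G_n$ (which satisfies the hypothesis, only finitely many finite cycles having been removed so far), choose by the first observation a finite cycle through $e_n$ in $G_n$ and add it; otherwise do nothing. By the second observation $G_{n+1}$ again satisfies the hypothesis, so the recursion never stalls, and $e_n$ is covered no later than step $n$. The chosen cycles are edge-disjoint and exhaust $E(G)$, giving the desired partition.

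The genuine obstacle is the \emph{uncountable} case, where the recursion must run through limit ordinals. Removing one finite cycle is harmless, but after removing infinitely many of them an \emph{infinite} cut can shrink to a finite one, and no parity bookkeeping forces the survivor to be even — so the hypothesis may fail at a limit stage. My plan to overcome this is to avoid naive transfinite deletion and instead reduce to countable pieces: fix a continuous increasing chain $(M_\alpha)_{\alpha<\kappa}$ of elementary submodels (equivalently, a filtration of $G$ with countable successive differences) such that each subgraph $G_\alpha:=G\cap M_\alpha$ reflects the cut condition and the ``interface'' between consecutive models is controlled. One then applies the countable case to the successive differences and glues the partitions. Guaranteeing that each difference graph again has only even or infinite cuts, so that the countable result applies and the local partitions assemble into a single global one, is the technical heart of the argument and the step I expect to spend the most effort on.
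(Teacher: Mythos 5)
Your necessity argument and your countable case are complete, correct, and standard; they are the same arguments this paper runs (for digraphs) inside the proof of Theorem \ref{main thm}. Note that the paper never actually proves Theorem \ref{nwundi} --- it cites Nash-Williams and develops its elementary-submodel machinery for the directed analogue --- so the fair comparison is with that machinery, which is exactly what your plan invokes. The genuine gap is that your uncountable case is a plan, not a proof, and it stops precisely where all of the difficulty of the theorem is concentrated. The step you defer (``guaranteeing that each difference graph again has only even or infinite cuts'') is not a technical footnote to be filled in later: it is the undirected analogue of Claim \ref{D-M jo}, and proving it is the entire content of the theorem beyond the countable case. Filling it requires ingredients your sketch never mentions: (i) the chain must be built with extra closure properties, notably $|M_\alpha|\subseteq M_\alpha$ and $M_\gamma\in M_\alpha$, $M_\gamma\subseteq M_\alpha$ for $\gamma<\alpha$, which a generic ``continuous increasing chain of elementary submodels'' does not automatically satisfy; (ii) a reduction bringing a violating cut of $G\bbslash M_\alpha$ into the form of a finite odd edge boundary $F$ of a connected set $X$ whose complementary piece $Y$ inside its component is also connected (the undirected analogue of Lemma \ref{elemi vag}); and (iii) the reflection lemmas: if two vertices of $M$ lie in a common component of $G\bbslash M$ then their edge-connectivity in $G$ exceeds $|M|$, and consequently (Soukup's lemma, Proposition \ref{egeszben szeparal}) any $F\subseteq E(G\bbslash M)$ with $|F|\leq |M|$ separating them in $G\bbslash M$ separates them in the whole of $G$. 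With (i)--(iii) one finishes as the paper does: $F$ separates $X$ from $Y$ in all of $G$, so the component $X'$ of $G\bbslash F$ containing $X$ has edge boundary in $G$ exactly equal to $F$ (no boundary edge of $X'$ can avoid $F$, and every edge of $F$ joins $X\subseteq X'$ to $Y$, which lies in a different component of $G\bbslash F$); hence $G$ itself has a finite odd cut, a contradiction. None of this is routine, and none of it appears in your proposal.

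Two further remarks. First, your parenthetical ``equivalently, a filtration of $G$ with countable successive differences'' is false once $|G|>\aleph_1$: in the complete graph on $\omega_2$ vertices, as soon as a model in the chain contains $\aleph_1$ vertices, adjoining a single new vertex forces $\aleph_1$ new edges into the next model (each such edge is definable from its two endpoints), so successive differences cannot stay countable. The correct organization is therefore a transfinite induction on the cardinality of $B$ (equivalently of $G$), in which the difference graphs are handled by the \emph{induction hypothesis}, not by the countable case; this is exactly how Lemma \ref{main lemma} is structured. Second, it is worth knowing where the undirected theorem is genuinely easier than the directed one you would eventually want: in your setting the violating object $F$ is a finite odd cut, so the bound $|F|\leq |M|$ needed for Soukup's lemma is free; in the directed case the witness of imbalance need not be finite, and establishing that bound is an additional argument --- this is precisely the extra difficulty the paper identifies and overcomes.
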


 After giving a simpler proof for Theorem \ref{nwundi}, C. Thomassen conjectured a 
 directed version of it (see \cite{thomassen2017nash} p. 1037). Since the main result of our paper is deciding this 
 conjecture positively, we state it as a theorem.
 
 \begin{thm}\label{main thm}
   If $ D=(V,E) $ is a directed graph, then $ E $ can be partitioned into directed cycles if and only if 
    for all $ X\subseteq V $ the cardinalities of the set of the ingoing and the outgoing edges of $ X $ are equal.
   \end{thm}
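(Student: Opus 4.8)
The necessity is the easy direction, and I would dispose of it first. Suppose $\mathcal{C}$ is a partition of $A$ into dicycles, fix $X\subseteq V$, and write $\delta^+(X),\delta^-(X)$ for the sets of arcs leaving and entering $X$. Every $C\in\mathcal{C}$ is a finite closed directed walk, so it crosses the boundary of $X$ equally often in each direction, i.e. $|C\cap\delta^+(X)|=|C\cap\delta^-(X)|<\infty$. Since $\delta^+(X)=\bigsqcup_{C}(C\cap\delta^+(X))$ and likewise for $\delta^-(X)$, summing these termwise-equal finite cardinals gives $|\delta^+(X)|=|\delta^-(X)|$. For the sufficiency direction I would first record a reduction: every arc lies on a dicycle. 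Given $e=(u,v)$, let $X$ be the set of vertices reachable from $v$; then no arc leaves $X$, so $\delta^+(X)=\emptyset$, whence the hypothesis forces $\delta^-(X)=\emptyset$. As $v\in X$, the arc $e$ would lie in $\delta^-(X)$ unless $u\in X$, so $u\in X$, i.e. $v$ reaches $u$, and a shortest such path together with $e$ is a dicycle through $e$. Because every dicycle lies inside one strongly connected component, no arc joins two distinct components; hence each weakly connected component is strongly connected, and I may treat the components separately and assume $D$ is strongly connected.

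The main construction I propose is a transfinite recursion. Fix a well-ordering $(e_\alpha)_{\alpha<\kappa}$ of $A$ and build an increasing chain $(\mathcal{D}_\alpha)$ of families of pairwise edge-disjoint dicycles with residual arc sets $A_\alpha:=A\setminus\bigcup\mathcal{D}_\alpha$, maintaining the invariant that $D_\alpha:=(V,A_\alpha)$ still satisfies the cut condition, and arranging $e_\alpha\in\bigcup\mathcal{D}_{\alpha+1}$. At a successor step, if $e_\alpha\notin A_\alpha$ do nothing; otherwise $D_\alpha$ satisfies the cut condition, so by the reduction above $e_\alpha$ lies on a dicycle $C\subseteq A_\alpha$, and I set $\mathcal{D}_{\alpha+1}=\mathcal{D}_\alpha\cup\{C\}$. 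Deleting one finite dicycle changes each of $|\delta^+(X)|,|\delta^-(X)|$ by the same finite amount, so the invariant survives every successor step, and if it also survives limits then $\bigcup_{\alpha<\kappa}\mathcal{D}_\alpha$ is the desired partition.

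The hard part will be the limit stages, and more precisely the infinite cuts. A finite cut is never an obstruction: at every stage the number of covered arcs in $\delta^+(X)$ equals that in $\delta^-(X)$ (each removed dicycle contributes equally) and these counts are finite, so the residual finite cut stays balanced even in the limit. The genuine difficulty is an infinite cut $\delta(X)$ with $|\delta^+(X)|=|\delta^-(X)|=\nu$: although, as cardinals, equally many arcs $\mu$ have been removed from each side by a limit stage $\lambda$, if $\mu=\nu$ then the residual sides $\delta^+(X)\cap A_\lambda$ and $\delta^-(X)\cap A_\lambda$ may have \emph{different} cardinalities, breaking the invariant. Coordinating this simultaneously over the possibly nested and crossing infinite cuts is the technical core.

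To overcome it I would force the chosen dicycles to cross each infinite cut only in balanced pairs, and I would reorganise the whole argument around \emph{pairing systems}. Choosing at each vertex $v$ a bijection $\pi_v\colon\delta^-(v)\to\delta^+(v)$ (which exists, as $|\delta^-(v)|=|\delta^+(v)|$) yields a successor permutation of $A$ sending each arc with head $v$ to its $\pi_v$-image; its orbits are directed trails, the finite ones split into dicycles, so the task becomes choosing the $\pi_v$ so that \emph{no orbit is bi-infinite}. A bi-infinite orbit is exactly a bi-infinite directed trail, and the cut condition is precisely what should prevent such a trail from escaping to infinity. I therefore expect the decisive step to be a transfinite construction of the pairing, organised along a well-ordering of the infinite cuts in which each cut's two sides are depleted in lockstep while strong connectivity is used to reroute. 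Showing that these local choices can be made coherently across all infinite cuts, and that the limit leaves no bi-infinite trail, is where I expect essentially all of the work to lie.
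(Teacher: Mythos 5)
Your necessity argument is correct, your reduction showing that every arc lies on a dicycle is correct, and your diagnosis of the obstruction is exactly right: the paper makes the same observation that the naive transfinite recursion cannot keep the residual digraph balanced past limit stages. But from that point on your proposal contains a reformulation followed by a plan, not a proof. The pairing-system idea is precisely \emph{equivalent} to the theorem: if a pairing with no bi-infinite orbit exists, its orbits are finite closed trails which split into dicycles; conversely, any partition into dicycles induces such a pairing by following each dicycle through each vertex. So ``choose the $\pi_v$ so that no orbit is bi-infinite'' is not a reduction of the problem --- it \emph{is} the problem, restated. Note also that balancedness does not by itself discourage bi-infinite orbits: the digraph on $\mathbb{Z}$ with arcs $i\to i+1$ and $i+1\to i$ for all $i$ is balanced (it partitions into $2$-cycles), yet the pairing that follows the forward arcs produces two bi-infinite orbits. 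Hence the cut condition must be exploited to \emph{select} a good pairing, and your closing paragraph --- well-order the infinite cuts, deplete both sides ``in lockstep'', reroute via strong connectivity --- offers no mechanism for doing so: there can be $2^{|V|}$ infinite cuts, they cross and nest, a single pairing decision at one vertex affects every cut through that vertex, and, crucially, the limit-stage pathology you set out to avoid reappears verbatim: after infinitely many steps of building a partial pairing you may have committed to partial orbits that can no longer be closed into finite ones. You acknowledge this yourself (``where I expect essentially all of the work to lie''), which is an admission that the hard direction remains unproved.

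For comparison, the paper closes this gap with elementary submodels rather than with any cut-by-cut bookkeeping. It proves, by induction on $|B|$, that every infinite set $B$ extends to a $\Sigma$-elementary submodel $M$ with $|M|=|B|$ such that for every balanced $D\in M$ the set $A(D)\cap M$ can be partitioned into dicycles (Lemma~\ref{main lemma}); the uncountable step writes $M$ as a continuous increasing chain $\left\langle M_\alpha\right\rangle$ and decomposes $A(D)\cap M$ into the slices $A(D_\alpha)$ with $D_\alpha=(D\bbslash M_\alpha)(M_{\alpha+1})$, each handled by the induction hypothesis. The whole difficulty is then concentrated into one claim (Claim~\ref{D-M jo}): if $|M|\subseteq M$ and $D\in M$ is balanced, then $D\bbslash M$ is still balanced. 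This is proved by combining a normal form for overloaded sets (Lemma~\ref{elemi vag}) with a separation property of elementary submodels (Proposition~\ref{egeszben szeparal}: an arc set of size at most $|M|$ that separates two vertices in $D\bbslash M$ already separates them in all of $D$), which shows that a violating cut of $D\bbslash M$ would yield a violating cut of $D$ itself. Some device of this kind --- a way of certifying that what survives an infinite stage is still balanced --- is exactly the content missing from your proposal.
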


It is worth to mention that already Nash-Williams himself claimed Theorem \ref{main thm}  to be true in
 \cite{nash1960decomposition}.  L. Soukup gave a new shorter proof for Theorem 
 \ref{nwundi}   (see Theorem 5.1 of \cite{soukup2011elementary}) based 
on elementary submodels. The main difficulty of the proof of Theorem \ref{main thm} compared to the undirected variant is the 
following. The obstacle for the cycle partition in Theorem \ref{nwundi} is a finite set (an odd cut)   but the obstacle usually fails to be 
finite in 
the context of  Theorem \ref{main thm}. 

The paper is designed to be comprehensible for everybody with a basic familiarity in 
infinite combinatorics. No advanced set theoretic concepts are used. Our main tool is the elementary submodel method and 
although it is relatively common in this field, we introduce it briefly. A more detailed 
introduction to elementary submodels where not even basic logic background (first order formulas and models) is assumed can be found in \cite{soukup2011elementary} with several applications in infinite combinatorics including a proof of 
Theorem \ref{nwundi}.  


\section{Notation}
The digraphs $ D=(V,E) $ in the paper may have loops and parallel edges.\footnote{One can reduce Theorem \ref{main thm} to 
the case of 
simple digraphs by the subdivision of parallel edges with a new vertex together with the deletion of loops but it would not 
make any difference in our proof.}  For a 
subset $ X $ of $ V $, we denote 
by $ \bm{\mathsf{out}_D(X)}$ and  $\bm{\mathsf{in}_D(X)} $  the set 
of outgoing and ingoing edges of $ X $  in $ D $ respectively\footnote{Edge $ e $ is ingoing  with respect to $ X $ if its head is in $ 
X $ but its tail is not.}
and let 
$ \bm{\mathsf{cut}_D(X)}:=\mathsf{out}_D(X)\cup \mathsf{in}_D(X) $.  For an  $ X\subseteq V $ let $ 
\bm{D[X]} $ be the subgraph of $ D $ 
induced by $ X $. The 
\textbf{weak  components} of a digraph are the components 
of its underlying  undirected graph. We call a digraph weakly connected if it has just one weak component, i.e., its undirected 
underlying graph 
is connected. If $ x,y $ are vertices of the path $ P $, then we denote by $ \bm{P[x,y]} $ the segment of $ P $ between $ x $ and 
$ y $ (including $ x $ and $ y $).  For an undirected graph $ G $ and $ u\neq v\in V(G) $, we write $ 
\bm{\lambda_G(u,v)}$ for the \textbf{local edge-connectivity} between $ u $ and $ v $ in $ G $, i.e., the smallest 
cardinal $ \kappa 
$ such that  it is possible to delete $\kappa$ many edges in such a way that $ u $ and $ v $ are in different components of the resulting graph. Let us recall that the local 
edge-connectivity is also the maximal cardinal $ 
\kappa $ such that there is a system $ \mathcal{P} $ of pairwise edge-disjoint paths of size $ \kappa $ between $ u $ and $ v $.
We call a subset $ X $ of $ V $ \textbf{overloaded} (with respect to $ D $) if $ 
\left|\mathsf{out}_D(X)\right|<\left|\mathsf{in}_D(X)\right| $. A digraph $ D $ is called  \textbf{unbalanced} 
 if it admits an overloaded vertex set and $ D $ is \textbf{balanced} if it is not unbalanced.
 
The variables $\bm{\alpha} $ and $ \bm{\beta} $ are standing always for ordinal numbers while we use $ 
\bm{\kappa} $ and  $ \bm{\lambda} $ for cardinals. 

\section{Elementary submodels and basic facts}

We give here a quick overview about elementary submodel techniques which play a central role in our proof. 
One can find a more detailed introduction with many combinatorial applications in \cite{soukup2011elementary}.

Roughly speaking, elementary submodels are sets which are closed under all possible (relevant) operations. Instead of ensuring 
the desired closures ``by hand''  it provides a flexible uniform framework giving effortlessly all the closures we  want to 
use. To make this precise, we need to apply some basic model theoretic concepts. All the formulas and models in this paper 
are 
in the 
first 
order language of set theory and the models are $ \in $-models, i.e., the ``element of'' relation in them is the real ``$ \in $''.  Let $ 
\Sigma=\{ \varphi_1,\dots, \varphi_n \} $ be a finite set of formulas where the free variables of $ \varphi_i $ are $ 
x_{i,1},\dots,x_{i,n_i} $. We call a set $ M $  a $ \bm{\Sigma} $\textbf{-elementary submodel} if $ 
\left|M\right|\subseteq M $ holds\footnote{The condition $ \left|M\right|\subseteq M $ is not 
always included in the definition of elementary submodels but it is a convenient assumption.} and the formulas in $ \Sigma $ are 
\textbf{absolute} between $ M $ and the universe, i.e., 
\[\bigwedge_{i=1}^{n} \left[ \forall x_1,\dots,  x_{n_i} \in M [(M 
\models \varphi_i(x_1,\dots,x_{n_i}))\Longleftrightarrow  \varphi_i(x_1,\dots,x_{n_i})] \right] . \] 
A fundamental fact we need (Corollary 2.6 in \cite{soukup2011elementary}) that one can find elementary submodels with certain 
prescribed parameters:
\begin{prop}\label{elemi reszm alap}
For every infinite cardinal $ \kappa $, every finite set $ \Sigma $ of formulas and every set $ x $  there exists a $ \Sigma 
$-elementary  submodel $ M $ of size $ \kappa $  with $ x\in M $.
\end{prop}

Defining a concrete $ \Sigma $ in a particular application is usually pointless. The common practise is ``pretending'' that $ \Sigma 
$ contains always what we need. These instances define implicitly afterwards what  the minimal sufficient  $ \Sigma $ is. Every $ 
\Sigma 
$-elementary submodel is closed under all operations $ F(x_1,\dots, x_n)=y $ defined by a formula  $ \varphi(x_1,\dots, 
x_n,y)  $ for which \[ \forall x_1\dots \forall x_n \exists! y \varphi(x_1,\dots, x_n,y),\  \varphi(x_1,\dots, x_n,y)\in \Sigma. \] 

Indeed, the first formula ensures that for every $ a_1,\dots, a_n\in M $ there is a $ b\in M $ such that  $ M\models 
\varphi(a_1,\dots, 
a_n,b)  
$  while the second guarantees that $ F(a_1,\dots, a_n)=b $.  To give a more explicit example, if a digraph $ D $ and one of 
its edges $ e $ are in the $ \Sigma $-elementary submodel $ M $, then the endpoints of $ e $ are also in $ M $ because they are 
definable from $ D $ and $ e $ and the corresponding defining formulas are assumed to be in $ \Sigma $. From now on whenever 
we claim that $ \Sigma $-elementary submodels have some specific properties, it is meant under the assumptions that $ \Sigma 
$ contains the necessary formulas.  

To illustrate arguments involving elementary submodels, we prove now three statements (which we need later anyway). Because 
of the introductory nature of this section  we give  more details in the proofs about 
how absoluteness is used and with which formulas than it is usual  in the normal practise. In order to keep the  arguments short, we will put some formulas ``redundantly'' in $\Sigma$ instead of using the fact that they are absolute even without being in $\Sigma$.

\begin{prop}\label{prop:smalsetSubset}
If $ M $ is a $ \Sigma $-elementary submodel and $ X\in M $ with $ \left|X\right|\leq\left|M\right| $, then $ X\subseteq M $.
\end{prop}
\begin{proof}
We assume that there are formulas in  $ \Sigma $ expressing the following:
\begin{enumerate}
\item\label{item: inCardinal op 1} For every $ X $ there is a cardinal $ \kappa $ such that $ \left|X\right|=\kappa $.
\item\label{item: inCardinal op 2}  
$ \left|X\right|=\kappa $.
\item\label{item: inCardinal op 3} 
$ f $ is a bijection  between $ X $ and $ Y$.
\item\label{item: inCardinal op 4}
$f\text{ is a function and } f(x)=y $.

\end{enumerate}
Since $ X\in M $ and formula \ref{item: inCardinal op 1} is in $ \Sigma $  there is a $ \kappa\in M $ such that $ M\models 
\left|X\right|=\kappa $. 
Because of formula \ref{item: inCardinal op 2} is in $ \Sigma $,  $
\left|X\right|=\kappa $ must hold. Then $ M\models ``\exists \text{ bijection from } \kappa\text{ to }  X $'' and therefore by 
basic logic there is some $ f\in M $ such that $ M\models ``f \text{ is a bijection from } \kappa \text{ to }  X  $''. As earlier, it 
must be true in the universe because formula \ref{item: inCardinal op 3} is in $ \Sigma $. It follows by basic logic that for every $\alpha\in M\cap \kappa$, there is a $y_\alpha \in M$ with $M\models f(\alpha)=y_\alpha$. Since formula \ref{item: inCardinal op 4} is in $\Sigma$,  $ f(\alpha)=y_\alpha $ holds for $\alpha \in M\cap \kappa$. But 
$ \kappa=\left|X\right|\subseteq \left|M\right|\subseteq M $ where the last inclusion was built in the definition of 
elementary submodels. Therefore $M\cap \kappa=\kappa$ and hence $f(\alpha)\in M$ for every $\alpha<\kappa$ from which  $X\subseteq M$ follows because $f: \kappa \rightarrow X$ is a bijection.

\end{proof}

If $ M $ is a $ \Sigma $-elementary submodel 
containing the directed or undirected graph $ G=(V,E) $, then we define $ \bm{G(M)}:=(V\cap 
M, E\cap M) $ and  $ \bm{G\bbslash M}:=(V,E\setminus M) $ which are subgraphs of $ G $.
\begin{prop}\label{nagyélöf}
Let $ G $ be an undirected graph an let $ M $ be a $ \Sigma $-elementary submodel with $ G\in M $.
Assume that $ \lambda_{G\bbslash M}(u,v)>0 $ for some $ u\neq v \in V(G)\cap M $. Then $ \lambda_{G}(u,v)>\left|M\right| $.
\end{prop}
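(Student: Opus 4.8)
The plan is to prove the contrapositive by a minimum-cut reflection argument. Recall that $\lambda_G(u,v)$ is the local edge-connectivity, which by the (cardinal version of the) edge Menger theorem equals the minimum cardinality of a $u$--$v$ edge cut of $G$; this minimum is attained, since the cut cardinalities form a set of cardinals. It therefore suffices to show that if the minimum $u$--$v$ edge cut of $G$ has cardinality $\kappa\le\lvert M\rvert$, then $u$ and $v$ are already separated in $G\bbslash M$, i.e. $\lambda_{G\bbslash M}(u,v)=0$, contradicting the hypothesis. So assume $\kappa:=\lambda_G(u,v)\le\lvert M\rvert$.

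First I would pull a minimum cut into $M$. Since $\kappa$ is the value of a set-theoretically definable function of the parameters $G,u,v\in M$, and we may assume the defining formula lies in $\Sigma$, $\Sigma$-elementarity yields $\kappa\in M$ together with $M\models$ ``$\kappa$ is the minimum $u$--$v$ cut size of $G$''. The sentence ``there exists a $u$--$v$ edge cut $C$ of $G$ with $\lvert C\rvert=\kappa$'' holds in the universe; reflecting it through $M$ with the parameters $G,u,v,\kappa\in M$ (and an appropriate formula in $\Sigma$) produces a witness $C\in M$.

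The crux is to upgrade $C\in M$ to $C\subseteq M$, and this is exactly where the hypothesis $\lvert M\rvert\subseteq M$ enters. Since $\lvert C\rvert=\kappa\le\lvert M\rvert$ we have $\kappa\subseteq\lvert M\rvert\subseteq M$. Reflecting the existence of a bijection gives some $b\colon\kappa\to C$ with $b\in M$, and for each $\alpha\in\kappa\subseteq M$ the element $b(\alpha)$ is definable from $b,\alpha\in M$, hence $b(\alpha)\in M$; thus $C=\{b(\alpha):\alpha<\kappa\}\subseteq M$. Now every edge of $C$ has been deleted in $G\bbslash M=(V,E(G)\setminus M)$, so any $u$--$v$ path in $G\bbslash M$ uses only edges of $E(G)\setminus M\subseteq E(G)\setminus C$ and is in particular a $u$--$v$ path of $G$ avoiding the cut $C$ --- which is impossible. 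Hence $\lambda_{G\bbslash M}(u,v)=0$, contradicting the assumption, so $\lambda_G(u,v)>\lvert M\rvert$.

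I expect the main obstacle to be the elementarity bookkeeping rather than the graph theory: ensuring that $\kappa$ itself is an element of $M$ (which comes from its being a definable value, not merely from $\kappa\le\lvert M\rvert$), and above all the passage from $C\in M$ to $C\subseteq M$, which for an infinite cut genuinely requires both $\lvert M\rvert\subseteq M$ and the enumeration through a bijection lying in $M$. The only external input is the identification of $\lambda_G$ with the minimum cut cardinality.
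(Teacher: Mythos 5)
Your proposal is correct and is essentially the paper's own argument: both prove the contrapositive by noting $\kappa=\lambda_G(u,v)\in M$ via definability, reflecting the existence of a $u$--$v$ separator of size $\kappa$ together with a bijection from $\kappa$ onto it to get witnesses in $M$, and then using $\kappa\le\left|M\right|\subseteq M$ to conclude the whole separator lies in $M$, hence is deleted in $G\bbslash M$. The only cosmetic difference is that you spell out the pointwise argument ($b(\alpha)\in M$ for each $\alpha\in\kappa$) that the paper compresses into ``the range of $f$ is a subset of $M$.''
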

\begin{proof}
We assume that $ \Sigma $ contains the formulas that expressing the following:
\begin{enumerate}
\item\label{item:elof 1}  $ (\forall G) (\forall u\neq v\in V(G)) \exists \kappa 
(\lambda_{G}(u,v)=\kappa) $.
\item\label{item:elof 1.5} $  \lambda_{G}(u,v)=\kappa $.
\item\label{item:elof 2} $ E'\subseteq E(G) $ separates the vertices $ u $ and $ v $ in graph $ G $.
\item\label{item:elof 3} $ \left|X\right|=\kappa $.

\end{enumerate}
Let $ u\neq v \in V(G)\cap M $ be arbitrary and suppose that $ \lambda_{G}(u,v)=:\kappa\leq\left|M\right| $. We have to show that 
$ \lambda_{G\bbslash M}(u,v)=0 $. Since $ G,u,v\in M $ and $ \kappa $ is definable from them and the formulas  
\ref{item:elof 1} and \ref{item:elof 1.5} are in $ \Sigma $, we know that $ \kappa\in M $ and $ M 
\models 
\lambda_{G}(u,v)=\kappa  $. Then there is some $ E' $ such that $ 
M\models ``E'\subseteq E(G) $ separates the vertices $ u $ and $ v $ in graph $ G $ and $ \left|E'\right|=\kappa $''. Formula 
\ref{item:elof 2}  ensures that $E'\subseteq E(G) $   separates the vertices $ u $ and $ v $ in graph $ G $ and 
formula \ref{item:elof 3} guarantees that $ \left|E'\right|=\kappa $.  But then by Proposition \ref{prop:smalsetSubset}, $ 
E'\subseteq M $ and  therefore $ 
\lambda_{G \bbslash M}(u,v)=0 $.
\end{proof}

We need the following result of L. Soukup (see \cite{soukup2011elementary}  Lemma 5.3 on p. 16):
\begin{prop}\label{egeszben szeparal}
Let $ G $ be an undirected graph and let $ M $ be a $ \Sigma $-elementary submodel with $ G\in M $. 
Assume 
that $ x\neq y\in V(G) $ are in the same component of $ G \bbslash M $ and $ F\subseteq E(G \bbslash M) $ separates them 
where $ 
\left|F\right| \leq \left|M\right|$. Then $ F $ separates $ x $ and $ y $ in the whole $ G $.
\end{prop}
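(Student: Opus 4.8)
The plan is to reduce the statement to the vertices of $G$ that actually lie in $M$, where Proposition~\ref{nagyélöf} can be applied, and the whole proof rests on the following auxiliary claim that I would establish first: if $u\neq v\in V(G)\cap M$ lie in the same component of $G\bbslash M$, then no set of at most $|M|$ edges of $G\bbslash M$ separates $u$ from $v$ inside $G\bbslash M$. To prove this, suppose $F'\subseteq E(G\bbslash M)$ with $|F'|\leq|M|$ did separate them, and let $T$ be the vertex set of the component of $u$ in $(G\bbslash M)\bbslash F'$, so $v\notin T$. Writing $\partial_G(T)$ for the set of edges of $G$ with exactly one endpoint in $T$, every such edge is either an edge of $M$ or an edge of $G\bbslash M$ leaving $T$; the latter must belong to $F'$, since otherwise it would enlarge the component $T$. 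Hence $\partial_G(T)\subseteq F'\cup(E(G)\cap M)$, so $|\partial_G(T)|\leq|M|$. But $\partial_G(T)$ separates $u$ from $v$ in $G$, whence $|\partial_G(T)|\geq\lambda_G(u,v)$, and Proposition~\ref{nagyélöf} gives $\lambda_G(u,v)>|M|$, a contradiction.

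With the claim in hand I would argue as follows. Let $C$ be the common component of $x$ and $y$ in $G\bbslash M$ and pass to the components of $(G\bbslash M)\bbslash F$; let $S_x$ and $S_y$ be the ones containing $x$ and $y$. Since $F$ separates $x$ and $y$ in $G\bbslash M$ we have $S_x\neq S_y$, and both are contained in $C$. By the auxiliary claim the at most $|M|$ edges of $F$ cannot separate any two vertices of $C\cap M$ inside $G\bbslash M$, so the whole of $C\cap M$ is contained in a single one of the components of $(G\bbslash M)\bbslash F$. As $S_x$ and $S_y$ are distinct, at least one of them is disjoint from $C\cap M$; call it $S^*$ and let $z\in\{x,y\}$ be its chosen vertex. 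Since $S^*\subseteq C$, this disjointness gives $S^*\cap M=\emptyset$.

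Finally I would compute the boundary of $S^*$ in $G$. If an edge $e=\{p,q\}\in\partial_G(S^*)$ with $p\in S^*$ belonged to $M$, then its endpoints would lie in $M$ as well — this is the one place where I use $\Sigma$-elementarity, assuming the formula expressing ``$p$ is an endpoint of $e$ in $G$'' to be in $\Sigma$ — giving the impossible $p\in S^*\cap M=\emptyset$. Thus $\partial_G(S^*)$ contains no edge of $M$, so $\partial_G(S^*)=\partial_{G\bbslash M}(S^*)\subseteq F$, the inclusion holding because $S^*$ is a component of $(G\bbslash M)\bbslash F$. Since $z\in S^*$ while the other of $x,y$ lies in the distinct component $S_x$ or $S_y$, hence outside $S^*$, the set $F\supseteq\partial_G(S^*)$ separates $x$ and $y$ in the whole $G$.

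The only genuine obstacle is that $x$ and $y$ are not assumed to be in $M$, so Proposition~\ref{nagyélöf} does not apply to the pair $(x,y)$ directly; the auxiliary claim is precisely the device that circumvents this, since it forces every $M$-vertex of $C$ to the same side of $F$ and thereby produces a side $S^*$ free of $M$-vertices, on which the edge boundaries taken in $G$ and in $G\bbslash M$ coincide. The remaining ingredients are pure bookkeeping: the cardinal estimate $|E(G)\cap M|\leq|M|$ and the absoluteness of ``being an endpoint of an edge''.
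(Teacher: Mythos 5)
Your proof is correct, and it takes a genuinely different route from the paper's. The paper argues by contradiction via path rerouting: assuming $F$ fails to separate $x$ and $y$ in $G$, it takes an $x$--$y$ path $P$ in $G\bbslash F$, observes that $P$ must use an edge of $E(G)\cap M$ (otherwise $P$ would survive in $(G\bbslash M)\bbslash F$), extracts the first and last $M$-vertices $x',y'$ of $P$, shows via the end-segments of $P$ and a path witnessing the hypothesis that $x',y'$ lie in one component of $G\bbslash M$, invokes Proposition \ref{nagyélöf} to get $\lambda_G(x',y')>\left|M\right|\geq\left|F\right|$, and reroutes through a new $x'$--$y'$ path $R$ avoiding $F$, contradicting that $F$ separates $x$ and $y$ in $G\bbslash M$. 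You instead build the separating cut directly: your auxiliary claim (which is exactly the special case $u,v\in M$ of the proposition, proved by playing the component-boundary estimate $\left|\partial_G(T)\right|\leq\left|F'\right|+\left|E(G)\cap M\right|\leq\left|M\right|$ against Proposition \ref{nagyélöf}) forces all $M$-vertices of the common component $C$ into a single component of $(G\bbslash M)\bbslash F$, so one of $S_x,S_y$ avoids $M$ entirely, and its $G$-boundary, containing no $M$-edges, is a cut of $G$ lying inside $F$. Both arguments have Proposition \ref{nagyélöf} as their engine, but yours is direct rather than by contradiction and yields slightly more: an entire $G$-cut $\partial_G(S^*)\subseteq F$ disjoint from $M$. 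A further point in favour of your formulation is that it handles the interaction with $M$-edges automatically, whereas in the rerouting argument the path $R$ must be chosen to avoid not only $F$ but also $E(G)\cap M$ for the final concatenation to stay in $G\bbslash M$; this is possible because $\lambda_G(x',y')>\left|M\right|\geq\left|F\cup (E(G)\cap M)\right|$, but the paper's write-up only requires $R$ to avoid $F$, leaving that small repair to the reader.
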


\begin{proof}
Assume (reductio ad absurdum) that it is false and $ G,F,x,y,M $ witness it. We take a path $ P $ between $ x $ and $ y $ in $ 
G\bbslash 
F  $. Let $ x' $ and  $ y' $ be the first and the last intersection of $ P $ with $ V(G)\cap M $ with respect to some
direction of $ P $. 
The vertices $ x' $ and $ y' $ are well-defined and distinct since $ P $ necessarily uses some edge from $ E(G)\cap M $. We also fix a 
path 
$ Q $ between 
$ x $ and $ y $ in $ G\bbslash M $. The paths $ P[x',x],\ Q,\ P[y,y'] $  shows that $ x' $ and $ y' $ are in the same component of $ 
G\bbslash  M $. Thus by Proposition \ref{nagyélöf}, $ \lambda_G(x',y')>\left|M\right| $. There is a path $ R $ between $ x' $ 
and $ y' $ 
in $ G 
\bbslash  F $ 
since $ \lambda_G(x',y')> \left|M\right|\geq \left|F\right| $. But then $ P[x,x'],\ R,\ P[y',y] $ shows that $ F $ does not separate $ x 
$ and $ y $ in $ G \bbslash  M $ which 
is a contradiction.  
\end{proof}

\section{Proof of the main result}

\begin{proof}[Proof of Theorem \ref{main thm}]
In any digraph $ D=(V,E) $ for every  $ X\subseteq V $ the contribution of a directed cycle to $ 
\left|\mathsf{in}_D(X) \right|$ and $ \left|\mathsf{out}_D(X)\right| $ is the same,  thus if  $ 
E $ can be partitioned into directed cycles, then $ D $ must be balanced.

For countable digraphs the other direction of the equivalence is also easy. Let $ D=(V,E) $ be a balanced countable digraph.
Observe that  for each weak component $ X $,  $ D[X] $  must be strongly connected.  Thus every $ e\in E $ is in some 
directed cycle of $ D $. 
Note that, a balanced digraph remains balanced after the deletion of the edges of a directed cycle. We create 
a desired partition by recursion.
Let $ \prec $ be an $ \left|E\right| $-type ordering of  $ E $ and $ D_0:=D $.  In the $ n $-th step we take a directed cycle $ C_n $ 
in $ D_n $  through its $ \prec 
$-smallest edge and define $ D_{n+1}:=D_n\bbslash  E(C_n)$. Clearly, the resulting cycles $ C_n $ give a desired
partition.

For  uncountable digraphs the analogue of this naive recursive approach  does not work  because in a transfinite recursion one 
cannot ensure that after the first limit step the remaining digraph is still balanced.

\begin{lem}\label{main lemma}
For every infinite cardinal $ \kappa $ and  every set $ x $  there is a $ \Sigma $-elementary submodel $ M $ of size $ \kappa $ 
with $ x\in 
M $ such that for any balanced digraph $ D\in M $ the edge set $ E(D)\cap M$ can 
be partitioned into directed  cycles.
\end{lem}

Theorem \ref{main thm} follows directly from Lemma \ref{main lemma}: let $ D=(V,E) $ be an arbitrary balanced digraph and 
we use Lemma \ref{main lemma} 
with  $ x:=D $ and $ \kappa := \left|E\right|+\aleph_0 $. Then Proposition \ref{prop:smalsetSubset} guarantees $ E\subseteq 
M $ which ensures $ D=D(M) $. Hence  Lemma \ref{main lemma} gives a desired partition for $ D $ itself.

\begin{proof} 
We prove Lemma  \ref{main lemma} by  transfinite induction on $ \kappa $. Consider first the case  $ 
\kappa=\aleph_{0}$.  
Let  $ M $ be an arbitrary countable $ \Sigma $-elementary submodel  with $ x\in M $ (such an $ M $ exists by Proposition 
\ref{elemi reszm alap}). Assume that $ 
D=(V,E)\in M $ is a digraph such that $ E\cap M $ cannot be partitioned into directed cycles. We 
have to show that $ D $ is 
unbalanced. We know that $ D(M) $ must be unbalanced because it 
is countable and we have already proved Theorem \ref{main thm} for countable digraphs. Let $ X\subseteq V\cap M $ be an overloaded 
set in $ D(M) $. Then $ 
\mathsf{out}_{D(M)}(X)$ is  finite because  \[ \left|\mathsf{out}_{D(M)}(X)\right|< 
\left|\mathsf{in}_{D(M)}(X)\right|\leq 
\left|M\right|=\aleph_{0}. \] 
Let $ S $ be a set whose elements are the tails of the edges in $\mathsf{out}_{D(M)}(X) $ and the
heads of at least $\left| \mathsf{out}_{D(M)}(X)\right|+1 $ many edges from $ \mathsf{in}_{D(M)}(X) $.  Consider the set $ Y$ 
of vertices  that are 
reachable by a directed path  from $ S $ in  $ D $ without using any edges from $\mathsf{out}_{D(M)}(X) $. We show that $ Y 
$ is overloaded in $ D $.
It follows directly from the definition of $ Y $ that $\mathsf{out}_{D}(Y)\subseteq \mathsf{out}_{D(M)}(X) $. In order to show 
that $ \left|\mathsf{in}_{D}(Y)\right|\geq \left| \mathsf{out}_{D(M)}(X)\right|+1 $, let an $ e\in \mathsf{in}_{D(M)}(X) 
$ with 
its head in $ S $ be fixed. To guarantee that $ e\in \mathsf{in}_{D}(Y) $, we need to show that the tail  of $ e $ is not in $ Y $, 
i.e., it is not reachable from  $ S $ in $ D $ without using  edges from $\mathsf{out}_{D(M)}(X) $. Suppose for a 
contradiction that it is. Then by $ D,S, e, \mathsf{out}_{D(M)}(X)\in M $  there is a directed 
path $ P\in M$ witnessing this. But then by Proposition \ref{prop:smalsetSubset}, $ E(P)\subseteq M $ and hence $ P $ lies in $ 
D(M) $. Since $ P $ starts in $ X $ but terminates out of $ X $ it must use some edge from $ \mathsf{out}_{D(M)}(X) $ which is 
a contradiction. Since there are at least  $ \left| \mathsf{out}_{D(M)}(X)\right|+1 $ such an edge $ e $, $ 
\left|\mathsf{in}_{D}(Y)\right|\geq \left| \mathsf{out}_{D(M)}(X)\right|+1 $ follows. We can conclude that $ Y $ is 
an overloaded set in $ D $ and thus $ D $ is unbalanced.

Let $ \lambda>\aleph_{0} $ and assume that Lemma \ref{main lemma} is true for $ \kappa<\lambda 
$. We define a sequence of $ \Sigma 
$-elementary submodels $ \left\langle M_\alpha: 
\alpha<\lambda  \right\rangle $ by transfinite recursion such that for all $ \alpha<\lambda $: 

\begin{enumerate}
\item $ x\in M_\alpha $,
\item $ \left|M_\alpha\right| =\left|\alpha\right|+\aleph_0$,
\item $ \alpha, M_\alpha\in M_{\alpha+1} $,
\item\label{item: rec 4} if $ D\in M_{\alpha+1} $ is a balanced digraph, then the edge-set of $ D(M_{\alpha+1}) $ (i.e. $ E\cap 
M_{\alpha+1} $) 
can be 
partitioned into directed 
cycles,
\item $ M_\alpha=\bigcup_{\beta<\alpha}M_\beta $ if $ \alpha $ is a limit ordinal. 
\end{enumerate}

 Note that Proposition \ref{prop:smalsetSubset}  guarantees that $ M_\beta\subseteq M_\alpha $  for $ 
 \beta<\alpha<\lambda $. Let $ M_0 $ be an arbitrary  countable $ \Sigma $-elementary submodel containing $ x $. Suppose that 
 $ 
 M_\beta$ is already 
defined if $  \beta<\alpha $ for some $  \alpha<\lambda $ and 
satisfies the properties 
above. If $ \alpha $ is a limit ordinal, then our only choice is $ M_\alpha:=\bigcup\{ M_{\beta}: \beta<\alpha \} $. Then  $ 
M_\alpha $ is a $ \Sigma $-elementary 
submodel since it is the increasing union of  $ \Sigma $-elementary submodels\footnote{This implication is a basic fact from 
model theory, the 
proof is a straightforward formula induction.}. 
If  $ \alpha=\beta+1 $,  then we apply the induction hypothesis with cardinal $ \left|\alpha\right| 
+\aleph_0<\lambda$ and set $ \{ \beta, M_\beta \} $  to obtain an $ M_{\beta+1} $ satisfying the conditions.  The 
recursion is done.

 Let $ M:= \bigcup 
 \{ M_\alpha: 
\alpha<\lambda \} $. Then $ M $ is a $ \Sigma $-elementary submodel of size $\lambda $ and $ x\in  M $. 
 Let $ D\in M $ be a balanced digraph and let
$ \beta+1<\lambda $ be the smallest ordinal such that $ D\in M_{\beta+1} $. We define $ 
D_{\beta}$ to be $D(M_{\beta+1}) $ and for $ \alpha $ with $ 
\beta<\alpha<\lambda $ let $ D_\alpha:=(D\bbslash 
M_\alpha)(M_{\alpha+1}) $. These are edge-disjoint subgraphs of $ D(M) $, 
moreover, $ \{ E(D_\alpha): \beta\leq\alpha<\lambda \} $ is a partition of $ E(D)\cap M $. Since $ (D\bbslash M_\alpha) $ is 
definable from $ D, M_\alpha\in M_{\alpha+1}$, 
 we have $ (D\bbslash M_\alpha)\in M_{\alpha+1}  $.

\begin{claim}\label{D-M jo}
If  $ M $ is a $ \Sigma $-elementary submodel  and $ D\in M $ is a 
balanced digraph, then 
$D\bbslash M $ is also balanced.  
\end{claim}

If we  prove Claim  \ref{D-M jo}, then we are done with the proof  of Lemma \ref{main lemma} as well. Indeed, by Claim  
\ref{D-M jo}, the digraphs $ 
D\bbslash M_\alpha $ are balanced and therefore  by 
using property \ref{item: rec 4} with $ D\bbslash M_\alpha $ and  $ M_{\alpha+1} $ we can partition $ E(D_\alpha) $ into 
directed cycles for all $ \alpha $ with $  \beta\leq \alpha<\lambda $. By uniting these partitions,  we obtain a desired partition of $ 
E\cap M $.

Before we turn to the proof of Claim  \ref{D-M jo}, we need the following  observation to find overloaded sets in an 
unbalanced digraph with an extra property. 
\begin{prop}\label{elemi vag}
If $ D=(V,E) $ is an unbalanced digraph, then it has a weak  component  $ Z $ and a partition $ 
Z=X\cup Y $  such 
that $ D[X] $ and $ D[Y] $ are weakly connected 
and $ X $ is overloaded. 
\end{prop}
\begin{proof}
Let $ X'\subseteq V $ be  overloaded and let $ X_i\ (i\in I) $ be the  weak components of $ 
D[X'] $. Then \[ \sum_{i\in 
I}\left|\mathsf{out}_D(X_i)\right|=\left|\mathsf{out}_D(X')\right|<\left|\mathsf{in}_D(X')\right|=\sum_{i\in 
I}\left|\mathsf{in}_D(X_i)\right| \] and therefore there is an 
$ 
i_0\in I $ such that $ 
\left|\mathsf{out}_D(X_{i_0})\right|<\left|\mathsf{in}_D(X_{i_0})\right| $. Let $ Z $  be  the weak 
component of $ D $ that 
contains $ X_{i_0} $ and let $ Y_j\ (j\in J) $ be the weak 
components of 
$ D[Z\setminus 
X_{i_0}] $. 
Then
\[ \sum_{j\in 
J}\left|\mathsf{in}_D(Y_j)\right|=\left|\mathsf{out}_D(X_{i_0})\right|<\left|\mathsf{in}_D(X_{i_0})\right|=\sum_{i\in 
J}\left|\mathsf{out}_D(Y_j)\right| \]  and thus 
there is a 
$ j_0\in J $ such that $ 
\left|\mathsf{in}_D(Y_{j_0})\right|<\left|\mathsf{out}_D(Y_{j_0})\right| $.   Let $ Y:=Y_{j_0} $ and  $ X:= Z\setminus 
Y_{j_0} $. Then $ X $ is overloaded and both $ X=X_{i_0}\cup\{Y_j: j\in J\setminus\{ j_0 \}\} $ and $ Y=Y_{j_0} $ induce a 
weakly connected subdigraph in $ D $ because of the definition of the sets $ Y_j\ (j\in J) $. 
\end{proof}

\begin{proof}[Proof of Claim \ref{D-M jo}]\renewcommand{\qedsymbol}{}

Assume for contradiction, that $ D\bbslash  M $ is unbalanced. 
Then by Proposition \ref{elemi vag} there is a weak component $ Z $ of $ D\bbslash M $ with a partition
 $Z= X\cup Y $  such that $ (D\bbslash M)[X] $ and $ (D\bbslash M)[Y] $ are weakly connected and $ X $ is overloaded in $ 
 D\bbslash M $. 
Let $ F:=\mathsf{cut}_{D\bbslash M}(X) $. Next we show that $ \left|F\right| \leq \left|M\right|$. We may suppose that $ F $ 
is infinite and thus $ \mathsf{cut}_D(X) $ as well since $ F\subseteq \mathsf{cut}_D(X) $. Thus 
$ \aleph_{0}\leq \left|\mathsf{out}_D(X)\right|=\left|\mathsf{in}_D(X)\right| $.  
We must have $ \left|\mathsf{out}_{D\bbslash M}(X)\right|< \left|\mathsf{out}_D(X)\right|$ since otherwise  
\[ \left|\mathsf{out}_{D\bbslash M}(X)\right|= \left|\mathsf{out}_D(X)\right|=\left|\mathsf{in}_D(X)\right|\geq 
\left|\mathsf{in}_{D\bbslash M}(X)\right| \] which 
contradicts the 
choice of 
$ X $. Hence $ M $ 
contains  $ \left|\mathsf{out}_D(X)\right| $  elements of $ 
\mathsf{out}_D(X) $ and thus $ 
\left|\mathsf{out}_D(X)\right|\leq \left|M\right| $.  Then  
\[ \left|F\right|= 
\left|\mathsf{in}_{D\bbslash M}(X)\right|+\left|\mathsf{out}_{D\bbslash M}(X)\right|\leq 
\left|\mathsf{in}_{D}(X)\right|+\left|\mathsf{out}_{D}(X)\right|=\left|\mathsf{out}_{D}(X)\right|\leq 
\left|M\right|. \]

By using 
Proposition \ref{egeszben szeparal} to 
the undirected underlying graph of $ D $ with $ F $ and with arbitrary $ x\in X$ and $ y\in Y $,  we conclude that $ X $ and $ Y 
$ belong to distinct weak components  of $ D\bbslash F  $. Let us denote by $ X' 
$ and $ Y' $   these components respectively. We claim that $ \mathsf{cut}_{D}(X')=F $. Indeed, $ 
\mathsf{cut}_{D}(X')\subseteq F $ 
follows directly from the  definition of $ X' $, furthermore,  the elements of $ F $ go between $ X $ and $ Y $ and therefore 
between $ X' $ and $ Y' $. But then $ \left|\mathsf{out}_{D\bbslash M}(X)\right|=\left|\mathsf{out}_D(X')\right| $ and $ 
\left|\mathsf{in}_{D\bbslash M}(X)\right|=\left|\mathsf{in}_D(X')\right| $ thus \[ \left| 
\mathsf{out}_D(X')\right|=\left|\mathsf{out}_{D\bbslash M}(X)\right|<\left|\mathsf{in}_{D\bbslash  
M}(X)\right|=\left|\mathsf{in}_D(X')\right| \]

therefore $ X' $ is overloaded in $ D $  which is a contradiction.
\end{proof}
\end{proof}
\end{proof}


\bibliographystyle{amsplain}

\begin{thebibliography}{99}
\bibitem{nash1960decomposition}
{\sc Nash-Williams, C. S.~J.}
\newblock Decomposition of graphs into closed and endless chains.
\newblock {\em Proceedings of the London Mathematical Society 3}, 1 (1960),
  221--238.


\bibitem{thomassen2017nash}
{\sc Thomassen, C.}
\newblock Nash-williams’ cycle-decomposition theorem.
\newblock {\em Combinatorica 37}, 5 (2017), 1027--1037.


\bibitem{soukup2011elementary}
{\sc Soukup, L.}
\newblock Elementary submodels in infinite combinatorics.
\newblock {\em Discrete Mathematics 311}, 15 (2011), 1585--1598.


\end{thebibliography}


\begin{aicauthors}
\begin{authorinfo}[jojo]
  Attila Jo\'o\\
  University of Hamburg and Egerv\'{a}ry Research Group
  on Combinatorial Optimization\\
  Hamburg, Germany\\
  attila\imagedot{}joo\imageat{}uni-hamburg\imagedot{}de \\
  \url{https://www.math.uni-hamburg.de/home/joo}
\end{authorinfo}
\end{aicauthors}

\end{document}